\documentclass[11pt]{article}

\usepackage[utf8]{inputenc}
\usepackage{amsmath, amssymb, amsthm, amsfonts}
\usepackage[margin=1in]{geometry}
\usepackage{enumitem}
\usepackage[hidelinks]{hyperref}
\usepackage{booktabs}

\newtheorem{theorem}{Theorem}[section]
\newtheorem{lemma}[theorem]{Lemma}
\newtheorem{proposition}[theorem]{Proposition}
\newtheorem{corollary}[theorem]{Corollary}
\newtheorem{conjecture}[theorem]{Conjecture}

\theoremstyle{definition}
\newtheorem{definition}[theorem]{Definition}
\newtheorem{remark}[theorem]{Remark}

\newtheorem{example}[theorem]{Example}

\newcommand{\armwalk}{\mathrm{arm\_walk}}

\title{Alternating Extremes in Graceful Labelings of \\
       Full Binary Trees and Spider Trees}
\author{%
  Bogdan Dumitru\thanks{Faculty of Mathematics and Computer Science,
  University of Bucharest, Romania.
  Emails: \texttt{bogdan.dumitru@fmi.unibuc.ro},
  \texttt{mihainacu906@gmail.com}.}
  \and
  Mihai Nacu\footnotemark[1]%
}
\date{}

\begin{document}
\maketitle

\begin{abstract}
We study two related uses of alternating extreme walks in graceful labelings.
For full binary trees, we formulate the pinned-spine conjecture: some deepest
root-to-leaf path should be labelable by the alternating extreme pattern
$0,n-1,1,n-2,\dots$ in a graceful labeling.  We prove the conjecture for comb
full binary trees.  We also prove a segregation lemma showing that such a spine
forces all off-spine labels into the middle interval and all off-spine
differences into the smaller differences.  Exhaustive search verifies the
pinned-spine conjecture for rooted non-isomorphic full binary trees through
order $23$, while a recursive heuristic succeeds except for one tree of order
$13$; a one-leaf relaxation succeeds through order $25$.

For spider trees, we use alternating walks on arms.  We prove a self-matched
packing theorem: pairwise disjoint self-matched legs based at hub label $1$, at
least one of which contains label $0$, can be combined into a graceful spider,
with unused labels attached as leaves.  This gives graceful labelings for
spiders $S(c,L_1,\dots,L_r,1^t)$ for all sufficiently large $t$.  We also record
a safe-zone reduction for the longest arm and formulate the remaining six-arm
problem as an offset five-arm residual problem.
\end{abstract}

\begin{quotation}
\small
\noindent\textbf{MSC 2020:} 05C78, 05C05, 05C85.
\end{quotation}

\section{Preliminaries}
\label{sec:prelim}

Let $T$ be a tree with vertex set $V(T)$ and edge set $E(T)$, with
$|V(T)|=n$ and $|E(T)|=n-1$.  A \emph{graceful labeling} of $T$ is a bijection
\[
   f:V(T)\longrightarrow \{0,1,\dots,n-1\}
\]
such that the induced edge labels
\[
   \{|f(u)-f(v)|:uv\in E(T)\}
\]
are exactly $\{1,2,\dots,n-1\}$.  The Graceful Tree Conjecture of Ringel and
Kotzig, often attributed to Rosa~\cite{Rosa1967}, asserts that every tree is
graceful.  The conjecture remains open; Aldred and McKay verified it through
order~$27$~\cite{AldredMcKay1998}, and Fang later extended the computation
through order~$35$~\cite{Fang2010}.

A \emph{full binary tree} (FBT) is a rooted tree in which every non-leaf vertex
has exactly two children.  The root is counted as internal unless the tree
consists of the root alone.  If an FBT has $m$ internal vertices, then it has
$m+1$ leaves and $n=2m+1$ vertices.

A \emph{spider tree} (ST) $S(\ell_1,\dots,\ell_k)$ is obtained from a central
hub vertex by attaching $k$ paths, called arms, at the hub.  The integer
$\ell_i$ is the number of edges in the $i$-th arm.  Thus the ST has
$1+\sum_i \ell_i$ vertices.

Throughout the FBT section, path length means number of edges.  Thus a chosen
root-to-leaf spine of length $\ell$ has vertices $v_0,\dots,v_\ell$.  In the ST
part of the paper the same convention is used for arm lengths: the parameter
$\ell_i$ counts edges, not vertices.

We shall use the following notation for alternating walks along a path.
Starting from a label
$c\in[0,n)$, choose an ordered list of differences
$D=(d_0,d_1,\dots,d_{m-1})$ and an initial sign
$\varepsilon\in\{+1,-1\}$.  Define
\[
   x_0=c,\qquad
   x_{i+1}=x_i+(-1)^i\varepsilon d_i
   \quad (i=0,\dots,m-1).
\]
We write the resulting sequence of new labels as
$\armwalk(c,D,\varepsilon,n)=(x_1,\dots,x_m)$.  The walk is valid if all labels
lie in $[0,n)$ and are distinct from the labels already in use.  By
construction, its edge differences are exactly the entries of $D$ in order.

The basic example is the alternating extreme sequence
\begin{equation}
\label{eq:alt-extreme}
   0,\ n-1,\ 1,\ n-2,\ 2,\ n-3,\ \dots ,
\end{equation}
obtained from $c=0$, $D=(n-1,n-2,\dots,1)$, and initial sign $+1$.  This is
Rosa's canonical graceful labeling of a path~\cite{Rosa1967}.

\section{Full Binary Trees}
\label{sec:fbt}

For FBTs we study a pinned version of graceful labeling.  Instead of only
asking for a graceful labeling, we ask whether some deepest root-to-leaf path
can carry the alternating extreme sequence
\eqref{eq:alt-extreme}.  This is stronger than ordinary gracefulness: the
spine consumes the largest edge differences, leaving the side branches with a
constrained residual problem.  We do not settle plain gracefulness of FBTs,
which is a special case of the Graceful Tree Conjecture; we study the stronger
pinned form and what it forces.

There is earlier work on binary and related trees.  Cahit asked whether all
complete binary trees are graceful~\cite{Cahit1976}; this complete-tree case is
covered by earlier balanced-tree constructions of Stanton and
Zarnke~\cite{StantonZarnke1973}, and by the later interlaced-tree framework of
Koh, Rogers, and Tan~\cite{KohRogersTan1979,KohRogersTan1981}.  Unlike those
constructions, the present question allows an arbitrary FBT and requires a
deepest root-to-leaf path to carry the alternating extreme pattern.  The
off-spine vertices and edges must then use the remaining labels and differences,
respectively.

\subsection{The pinned-spine conjecture}

\begin{definition}[Alternating extreme spine]
\label{def:alt-spine}
Let $T$ be an FBT on $n$ vertices, and let
\[
   v_0v_1\cdots v_\ell
\]
be a longest root-to-leaf path, with $v_0$ the root.  The alternating extreme
spine assignment is
\[
   s(v_i)=
   \begin{cases}
      i/2, & i\text{ even},\\[2pt]
      n-1-(i-1)/2, & i\text{ odd}.
   \end{cases}
\]
Thus the spine labels are $0,n-1,1,n-2,\dots$, truncated after $\ell+1$
terms.
\end{definition}

\begin{conjecture}[Pinned-spine conjecture for FBTs]
\label{conj:pinned-spine}
Every FBT $T$ on $n$ vertices admits a graceful labeling $f$ such that, for
some longest root-to-leaf path $v_0v_1\cdots v_\ell$, the restriction of $f$ to
that path is the alternating extreme spine assignment.
\end{conjecture}

Once such a longest path is chosen, the remaining vertices are labeled under
the restrictions imposed by that choice.  We next state those restrictions, and
then prove the conjecture for the comb family.

\subsection{Forced segregation}

\begin{lemma}[Spine segregation]
\label{lem:segregation}
Suppose $f$ is a graceful labeling of an FBT $T$ whose longest root-to-leaf
path $v_0v_1\cdots v_\ell$ carries the alternating extreme spine assignment.
Let
\[
   a=\left\lceil\frac{\ell+1}{2}\right\rceil,
   \qquad
   b=\left\lfloor\frac{\ell+1}{2}\right\rfloor .
\]
Then the spine uses the labels
\[
   \{0,1,\dots,a-1\}\cup\{n-b,n-b+1,\dots,n-1\},
\]
and the spine edges use exactly the differences
\[
   \{n-\ell,n-\ell+1,\dots,n-1\}.
\]
Consequently, all off-spine vertices are labeled from the middle block
\[
   M=\{a,a+1,\dots,n-1-b\},
\]
and the off-spine edges must realize exactly
\[
   \{1,2,\dots,n-\ell-1\}.
\]
\end{lemma}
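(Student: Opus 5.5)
The plan is a direct computation from Definition~\ref{def:alt-spine}, followed by a counting argument that uses the bijectivity built into the definition of a graceful labeling.

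\emph{Spine labels.} Among the indices $0,\dots,\ell$ there are $a=\lceil(\ell+1)/2\rceil$ even indices and $b=\lfloor(\ell+1)/2\rfloor$ odd ones.
\begin{itemize}
\item The even index $i=2j$, $0\le j\le a-1$, receives label $j$. So the even-indexed vertices carry exactly $\{0,\dots,a-1\}$.
\item The odd index $i=2j+1$, $0\le j\le b-1$, receives label $n-1-j$. So the odd-indexed vertices carry exactly $\{n-b,\dots,n-1\}$.
\end{itemize}
These two blocks are disjoint, i.e.\ $a-1<n-b$, because $a+b=\ell+1\le n$. This also shows that the assignment is injective, so the spine uses $\ell+1$ distinct labels.

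\emph{Spine differences.} Compute $|s(v_{i+1})-s(v_i)|$ for $0\le i\le\ell-1$ by parity.
\begin{itemize}
\item If $i=2j$, the difference is $(n-1-j)-j=n-1-2j=n-1-i$.
\item If $i=2j+1$, the difference is $(n-1-j)-(j+1)=n-2-2j=n-1-i$.
\end{itemize}
So edge $v_iv_{i+1}$ has difference $n-1-i$. As $i$ ranges over $0,\dots,\ell-1$ this yields exactly $\{n-\ell,\dots,n-1\}$, each value once. This is just the walk $\armwalk(0,(n-1,\dots,n-\ell),+1,n)$ from the preliminaries, whose differences are the entries of $D$ by construction.

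\emph{Consequences.} Since $f$ is a bijection onto $\{0,\dots,n-1\}$ and agrees with $s$ on the spine, the off-spine vertices receive exactly the complement of the spine labels. That complement is $M=\{a,\dots,n-1-b\}$. Since the induced edge labels are exactly $\{1,\dots,n-1\}$ and there are $n-1$ edges, each difference occurs exactly once. Removing the $\ell$ spine differences leaves $\{1,\dots,n-\ell-1\}$, and these must be realized by the remaining $n-1-\ell$ off-spine edges.

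There is no real obstacle in this argument. The only points needing care are the parity bookkeeping for $a$ and $b$ and the check $\ell+1\le n$, which makes the two label blocks disjoint and $M$ well defined. Note also that an off-spine edge incident to a spine vertex uses a spine label at one end, so only the vertex labels, not the edge endpoints, are confined to $M$.
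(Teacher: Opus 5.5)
Your proposal is correct and follows the paper's proof: you split the spine labels by parity, show each spine edge $v_iv_{i+1}$ has difference $n-1-i$, and take complements using bijectivity of the vertex labels and of the edge differences. The explicit disjointness check $a+b=\ell+1\le n$ is a small detail that the paper leaves implicit.
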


\begin{proof}
The even spine positions receive the labels $0,1,\dots,a-1$, while the odd
spine positions receive $n-1,n-2,\dots,n-b$.  Consecutive spine labels differ
by
\[
   |s(v_{i+1})-s(v_i)|=n-1-i
   \qquad (i=0,\dots,\ell-1),
\]
so the spine differences are $n-1,n-2,\dots,n-\ell$.  A graceful labeling uses
each label and each nonzero difference exactly once, so the remaining labels
and differences are precisely the stated middle block and lower difference
block.
\end{proof}

Once the spine is fixed, the remaining labels and differences are determined as
sets: the side branches must live in the middle labels and must spend the small
differences.

\subsection{The comb FBT}

We next treat the comb FBT.  For $\ell\ge1$, let $C_\ell$ be the rooted full
binary tree with internal vertices
\[
   v_0,v_1,\dots,v_{\ell-1}
\]
and leaves
\[
   w_0,w_1,\dots,w_{\ell-1},v_\ell .
\]
The parent--children relation is
\[
   v_i\to\{v_{i+1},w_i\}\qquad(0\le i\le \ell-1).
\]
Thus $v_0v_1\cdots v_\ell$ is a root-to-leaf path, and each internal vertex on
this path has its other child equal to a leaf.  The tree has $2\ell+1$ vertices.
This is the ``comb'' FBT: the path $v_0v_1\cdots v_\ell$ is the spine, and the
leaves $w_i$ are the teeth.

As an unrooted tree, $C_\ell$ is a caterpillar, so Rosa's theorem already
establishes its gracefulness~\cite{Rosa1967}.  Here we require the stronger
pinned-spine condition.

\begin{theorem}[Comb FBT]
\label{thm:comb-fbt}
For every $\ell\ge1$, the comb FBT $C_\ell$ admits a graceful labeling
satisfying the pinned-spine conjecture.
\end{theorem}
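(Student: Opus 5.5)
The plan is to write down an explicit labeling and check it against Lemma~\ref{lem:segregation}. Put $n=2\ell+1$ and take the root-to-leaf path $v_0v_1\cdots v_\ell$ as the pinned spine. This path has length $\ell$, which is the depth of $C_\ell$, so it is a longest root-to-leaf path. Label it with the alternating extreme spine assignment $s$. By Lemma~\ref{lem:segregation}, the spine edges then use exactly the differences $\{\ell+1,\dots,2\ell\}$. It therefore remains to label the $\ell$ teeth $w_0,\dots,w_{\ell-1}$ bijectively with the middle block $M=\{a,\dots,n-1-b\}$, where $|M|=n-(\ell+1)=\ell$, so that the tooth edges $v_iw_i$ realize each difference $1,\dots,\ell$ exactly once.

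The natural choice is to make the tooth differences decrease along the spine, continuing the pattern of the spine itself: require $|f(w_i)-s(v_i)|=\ell-i$. The tooth is placed toward the middle of the range. For an even index, $s(v_{2j})=j$ is a low label, so set $f(w_{2j})=j+(\ell-2j)=\ell-j$. For an odd index, $s(v_{2j+1})=2\ell-j$ is a high label, so set $f(w_{2j+1})=2\ell-j-(\ell-2j-1)=\ell+j+1$. For example, with $\ell=3$ this gives spine labels $0,6,1,5$ and teeth $3,4,2$ on $v_0,v_1,v_2$.

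The differences $\ell,\ell-1,\dots,1$ are then obtained by construction, one for each $i=0,\dots,\ell-1$. Together with the spine differences this gives all of $\{1,\dots,2\ell\}$.

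The main step is to show that the tooth labels are distinct and fill exactly $M$.
\begin{itemize}
\item The even teeth ($0\le j\le\lfloor(\ell-1)/2\rfloor$) receive the consecutive labels $\ell,\ell-1,\dots,\ell-\lfloor(\ell-1)/2\rfloor$.
\item The odd teeth ($0\le j\le\lfloor(\ell-2)/2\rfloor$) receive $\ell+1,\dots,\ell+1+\lfloor(\ell-2)/2\rfloor$.
\end{itemize}
These two runs are disjoint and adjacent, so together they form one interval of $\lceil \ell/2\rceil+\lfloor \ell/2\rfloor=\ell$ integers. Its smallest element is $\ell-\lfloor(\ell-1)/2\rfloor=\lceil(\ell+1)/2\rceil=a$. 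An interval of $\ell$ consecutive integers starting at $a$ is exactly $M$. Hence the tooth labels are disjoint from the spine labels, and $f$ is a bijection onto $\{0,\dots,n-1\}$.

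I expect the only fiddly point to be this floor/ceiling bookkeeping, in particular confirming that the lower endpoint equals $a$ in both parities of $\ell$. The cases $\ell=1,2,3$ serve as sanity checks. Graceful-ness together with the pinned spine then follows directly.
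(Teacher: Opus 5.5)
Your proposal is correct and is essentially the paper's proof. Your tooth labels $f(w_{2j})=\ell-j$ and $f(w_{2j+1})=\ell+1+j$ coincide exactly with the paper's, which treats $\ell=2k-1$ and $\ell=2k$ as separate cases where you handle both parities at once with floors and ceilings (and you additionally check that $v_0\cdots v_\ell$ is a longest root-to-leaf path).
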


\begin{proof}
Put the alternating extreme labels on the spine:
\[
   f(v_i)=
   \begin{cases}
      i/2, & i\text{ even},\\[2pt]
      n-1-(i-1)/2, & i\text{ odd}.
   \end{cases}
\]
The remaining labels form the middle block of Lemma~\ref{lem:segregation}.
We assign them to the leaves so that the leaf-edge differences are exactly
$1,2,\dots,\ell$.

Let $h=\ell+1$ be the number of spine vertices.  First suppose $h=2k$.  Then
$n=4k-1$ and $\ell=2k-1$.  The middle block is
\[
   M=\{k,k+1,\dots,3k-2\}.
\]
Define
\[
   f(w_{2j})=2k-1-j\quad (0\le j\le k-1),
   \qquad
   f(w_{2j+1})=2k+j\quad (0\le j\le k-2).
\]
For even spine positions,
\[
   |f(w_{2j})-f(v_{2j})|
      =(2k-1-j)-j
      =2k-1-2j,
\]
which gives the odd differences $2k-1,2k-3,\dots,1$.  For odd spine positions,
\[
   |f(w_{2j+1})-f(v_{2j+1})|
      =(4k-2-j)-(2k+j)
      =2k-2-2j,
\]
which gives the even differences $2k-2,2k-4,\dots,2$.  Together these are
$\{1,\dots,2k-1\}=\{1,\dots,\ell\}$.

Now suppose $h=2k+1$.  Then $n=4k+1$ and $\ell=2k$.  The middle block is
\[
   M=\{k+1,k+2,\dots,3k\}.
\]
Define
\[
   f(w_{2j})=2k-j\quad (0\le j\le k-1),
   \qquad
   f(w_{2j+1})=2k+1+j\quad (0\le j\le k-1).
\]
For even spine positions,
\[
   |f(w_{2j})-f(v_{2j})|
      =(2k-j)-j
      =2k-2j,
\]
which gives the even differences $2k,2k-2,\dots,2$.  For odd spine positions,
\[
   |f(w_{2j+1})-f(v_{2j+1})|
      =(4k-j)-(2k+1+j)
      =2k-1-2j,
\]
which gives the odd differences $2k-1,2k-3,\dots,1$.  Together these are
$\{1,\dots,2k\}=\{1,\dots,\ell\}$.

In both cases the spine differences are
$\{\ell+1,\ell+2,\dots,n-1\}$ by Lemma~\ref{lem:segregation}, while the leaf
edges produce $\{1,\dots,\ell\}$.  The labels are a bijection onto
$\{0,\dots,n-1\}$, so the labeling is graceful.
\end{proof}

The proof exhibits a useful monotonic pairing.  Leaves attached to low spine
labels receive the lower part of the middle block in decreasing order; leaves
attached to high spine labels receive the upper part in increasing order.  The
two monotone lists produce opposite parities of differences and interleave to
fill $\{1,\dots,\ell\}$.  In this family, the pinned spine determines a direct
assignment of all remaining labels.

\subsection{What happens off the spine}

For general FBTs, side branches are not single leaves.  Lemma~\ref{lem:segregation}
still says that all off-spine labels lie in the middle block $M$, but individual
side subtrees need not receive intervals inside $M$.  In our computations,
gapped label sets occur on side subtrees, and the internal
differences assigned to a side subtree are also gapped because the connecting
edge to the spine spends one of the small differences.

In the examples found by search, a side subtree is often labeled as if it were
using alternating extremes of its own allocated label set, even when that set
has gaps:
\[
   a_1,\ a_k,\ a_2,\ a_{k-1},\dots
   \qquad
   (a_1<\cdots<a_k).
\]
This observation is not a proof, but it explains why a recursive spine-first
strategy is reasonable.

\subsection{Search and verification}

The verification in Table~\ref{tab:fbt-empirical} uses an exact pinned-spine
search.  For each rooted non-isomorphic FBT, the program tries each longest
root-to-leaf path, fixes the alternating extreme labels on that path, and then
searches for a completion on the off-spine vertices using the remaining labels
and differences from Lemma~\ref{lem:segregation}.  Thus the last column of the
table is a direct computational test of the pinned-spine conjecture in the stated
range.

The table also records the strict recursive heuristic.  After the main spine is
fixed, each side subtree is treated in the same way: choose a longest local
spine, label it by the extremes of the label set assigned to that subtree, and
continue recursively.  The convention is as follows.  If a side subtree with assigned label set
$S=\{a_1<\cdots<a_r\}$ hangs from a parent with a high label, then the local
spine starts at $a_1$; if it hangs from a parent with a low label, then the
local spine starts at $a_r$.  Thus the first edge into the side subtree crosses
from high to low or from low to high.

The strict heuristic is useful but not complete.  It fails on one $13$-vertex FBT,
although the exact pinned-spine search still finds a labeling there.  All rooted
non-isomorphic FBTs through order $23$ were generated recursively by splitting
the two child subtrees at the root and identifying mirror duplicates by
canonical rooted signatures.

\begin{table}[ht]
\centering
\begin{tabular}{rrrr}
\toprule
$n$ & FBTs tested & strict heuristic & pinned-spine labeling found \\
\midrule
5  & 1   & 1/1     & 1/1 \\
7  & 2   & 2/2     & 2/2 \\
9  & 3   & 3/3     & 3/3 \\
11 & 6   & 6/6     & 6/6 \\
13 & 11  & 10/11   & 11/11 \\
15 & 23  & 23/23   & 23/23 \\
17 & 46  & 46/46   & 46/46 \\
19 & 98  & 98/98   & 98/98 \\
21 & 207 & 207/207 & 207/207 \\
23 & 451 & 451/451 & 451/451 \\
\bottomrule
\end{tabular}
\caption{Exhaustive FBT data for rooted non-isomorphic FBTs through order
$23$.  The last column was checked by a search fixing the alternating spine.}
\label{tab:fbt-empirical}
\end{table}

A relaxed version of the heuristic allows only the terminal leaf of a local
spine to swap with another label assigned to the same local subtree.  This
one-leaf relaxation removes the single strict-heuristic failure and succeeds on
every rooted non-isomorphic FBT tested through order $25$.

\subsection{A threshold obstruction}

The search data raise a stronger question: can every FBT admit a pinned-spine
$\alpha$-labeling?  An $\alpha$-labeling is a graceful labeling for which there
is an integer $\alpha$ such that every edge has one endpoint labeled at most
$\alpha$ and the other endpoint labeled greater than $\alpha$.  The following
example answers this question negatively.

\begin{example}[Threshold obstruction]
\label{prop:threshold-obstruction}
Let $T^*$ be the FBT with parent--children relation
\[
   0\to\{1,2\},\quad
   2\to\{3,10\},\quad
   3\to\{4,5\},\quad
   5\to\{6,7\},\quad
   7\to\{8,9\},\quad
   10\to\{11,12\}.
\]
Then $T^*$ admits a graceful labeling realizing the pinned-spine conjecture, but
no $\alpha$-labeling realizes the pinned-spine conjecture on $T^*$.
\end{example}

\begin{proof}
The labeling
\[
   (f(0),f(1),\dots,f(12))
   =(0,4,12,1,8,11,9,2,10,7,6,3,5)
\]
is graceful: its labels are $\{0,\dots,12\}$ and its edge differences are
$\{1,\dots,12\}$.  On the longest path
$0\to2\to3\to5\to7\to8$, the labels are
$0,12,1,11,2,10$, which is the alternating extreme spine.

For the obstruction, use the depth-parity bipartition.  The even-depth class is
$V_0=\{0,3,6,7,10\}$ and the odd-depth class is
$V_1=\{1,2,4,5,8,9,11,12\}$.  Any $\alpha$-labeling must put one bipartition
class below the threshold and the other above it.  Since the pinned spine
contains $0,1,2$ in $V_0$ and $12,11,10$ in $V_1$, it forces
$V_0$ to use $\{0,1,2,3,4\}$ and $V_1$ to use $\{5,\dots,12\}$.  The remaining
vertices $6$ and $10$ in $V_0$ must therefore receive labels $3$ and $4$.
But vertex $10$ is adjacent to vertex $2$, whose forced label is $12$, so the
edge $(2,10)$ has difference $8$ or $9$.  Both differences are already used on
the pinned spine.  This contradiction rules out a pinned-spine
$\alpha$-labeling.
\end{proof}

Thus a proof of the pinned-spine conjecture cannot assume that every edge crosses a
fixed low-high threshold.

\subsection{FBT questions}

The FBT part leaves three concrete problems.
\begin{enumerate}[label=Q\arabic*.,itemsep=2pt]
   \item Prove the pinned-spine conjecture for all FBTs.
   \item Explain the label-set allocation on side subtrees, especially the
   gapped sets that appear in successful pinned-spine labelings.
   \item Extend the comb theorem from leaf side branches to FBTs in which some
   off-spine children root larger full binary subtrees.
\end{enumerate}

\section{Spider Trees}
\label{sec:spider}

We now turn to spider trees.  We write
$S(\ell_1,\dots,\ell_k)$ for an ST with arms sorted as
$\ell_1\le\cdots\le\ell_k$, and put
\[
   n=1+\sum_{i=1}^k \ell_i .
\]
Known results cover many important cases.  Huang, Kotzig, and Rosa proved that
STs with three or four arms are graceful~\cite{HuangKotzigRosa1982}.  More
recently, STs with at most five arms were shown to be
graceful~\cite{Panpa2025}; STs with at most four arms of length greater than
one are graceful~\cite{Hindawi4legs}; and STs whose arm lengths differ by at
most one are graceful~\cite{BahlsLakeWertheim}.  Kanetkar and Sane also treat
the arithmetic-progression quasistars with path lengths
$1,d+1,\dots,(k-1)d+1$ for $k\le6$~\cite{KanetkarSane2007}.  In concurrent work,
Shan and Zhong treat two further families: arms with fast-growing lengths, and
spiders with one long arm and all other arms of length at most
two~\cite{ShanZhong2026}.  These constructions address a different regime from
the cited results.  We use a safe-zone reduction after fixing the longest arm
and a hub-one packing construction in which unused labels are attached as
leaves.  We then formulate the six-arm residual problem, to which the known
five-arm theorem does not directly apply.

\subsection{The safe zone}

The longest arm plays the same role as the pinned spine in the FBT section: it
spends the largest differences and leaves a controlled residual problem.  This
is the standard zigzag setup; we restate it in the form we use later.

\begin{lemma}[Safe zone]
\label{lem:st-safe-zone}
Let $S(\ell_1,\dots,\ell_k)$ be an ST with $k\ge3$ and
$\ell_1\le\cdots\le\ell_k$.  Set $R=n-1-\ell_k$.  Define
\[
H=
\begin{cases}
\ell_k/2, & \ell_k \text{ even},\\[2pt]
n-(\ell_k+1)/2, & \ell_k \text{ odd}.
\end{cases}
\]
If the hub receives label $H$ and the primary arm of length $\ell_k$ is labeled
by
\[
   \armwalk\bigl(H,(n-\ell_k,n-\ell_k+1,\dots,n-1),\varepsilon,n\bigr),
\]
with $\varepsilon=+1$ for even $\ell_k$ and $\varepsilon=-1$ for odd
$\ell_k$, then that arm uses exactly the differences
$\{n-\ell_k,\dots,n-1\}$.  The unused labels form one interval $[A,B]$ of size
$R$, adjacent to the hub: $H=A-1$ when $\ell_k$ is even and $H=B+1$ when
$\ell_k$ is odd.  The remaining differences are $\{1,\dots,R\}$.
\end{lemma}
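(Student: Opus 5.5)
The plan is to reduce the statement to the computation already carried out in Lemma~\ref{lem:segregation}. The key observation is that the primary arm, read \emph{backwards} from its free end to the hub, is exactly an alternating extreme sequence of the form \eqref{eq:alt-extreme}. Once this is established, the label set, the difference set and the position of the hub can be read off from that lemma.

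\textbf{Step 1: the arm read backwards is the alternating extreme sequence.} Write $\ell=\ell_k$ and let $y_0,y_1,\dots,y_\ell$ be the first $\ell+1$ terms of \eqref{eq:alt-extreme}:
\[
y_j=j/2 \text{ for even } j, \qquad y_j=n-1-(j-1)/2 \text{ for odd } j .
\]
Consecutive terms satisfy $|y_{j+1}-y_j|=n-1-j$. I will show that $x_i=y_{\ell-i}$ for $0\le i\le \ell$, where $x_0=H$ and $(x_1,\dots,x_\ell)=\armwalk(H,D,\varepsilon,n)$ with $D=(n-\ell,\dots,n-1)$.
\begin{itemize}
\item \emph{Start point.} We need $y_\ell=H$. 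If $\ell$ is even, $y_\ell=\ell/2=H$. If $\ell$ is odd, $y_\ell=n-1-(\ell-1)/2=n-(\ell+1)/2=H$.
\item \emph{Step sizes.} The $i$-th step of the walk has size $d_i=n-\ell+i$. The step from $y_{\ell-i}$ to $y_{\ell-i-1}$ has size $n-1-(\ell-i-1)=n-\ell+i$, so the magnitudes agree.
\item \emph{Step signs.} The walk moves by $(-1)^i\varepsilon d_i$. Since \eqref{eq:alt-extreme} alternates low, high, low, $\dots$, the move from $y_{\ell-i}$ to $y_{\ell-i-1}$ is downward exactly when $\ell-i$ is odd. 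So the sign of that move is $+1$ when $\ell-i$ is even and $-1$ when $\ell-i$ is odd, i.e.\ it equals $(-1)^{\ell-i}=(-1)^i(-1)^\ell$.
\item \emph{Choice of $\varepsilon$.} We therefore need $\varepsilon=(-1)^\ell$, which is $+1$ for even $\ell$ and $-1$ for odd $\ell$, exactly as prescribed. A one-line induction on $i$ then gives $x_i=y_{\ell-i}$.
\end{itemize}

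\textbf{Step 2: validity and the arm's differences.} By Step~1 the arm carries distinct labels in $[0,n)$, so the walk is valid. Its edge differences are the entries of $D$, namely $\{n-\ell,\dots,n-1\}$, by construction of $\armwalk$.

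\textbf{Step 3: unused labels and the hub's position.} The argument of Lemma~\ref{lem:segregation} uses only the labels $y_0,\dots,y_\ell$, not the FBT structure. With $a=\lceil(\ell+1)/2\rceil$ and $b=\lfloor(\ell+1)/2\rfloor$, it shows that the hub together with the arm uses $\{0,\dots,a-1\}\cup\{n-b,\dots,n-1\}$. Hence the unused labels form the interval $[A,B]=[a,n-1-b]$, of size $n-(a+b)=n-1-\ell=R$.
\begin{itemize}
\item If $\ell$ is even, then $a=\ell/2+1$, so $H=\ell/2=a-1=A-1$.
\item If $\ell$ is odd, then $b=(\ell+1)/2$, so $H=n-b=B+1$.
\end{itemize}
Finally, the remaining differences are $\{1,\dots,n-1\}\setminus\{n-\ell,\dots,n-1\}=\{1,\dots,R\}$.

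\textbf{Main obstacle.} The only delicate point is the sign bookkeeping in Step~1: matching the alternation $(-1)^i\varepsilon$ of the forward walk against the low/high alternation of the reversed sequence, separately for the two parities of $\ell$. I would settle this by checking the final step $i=\ell-1$, which must land on $y_0=0$ and so move downward by $n-1$. Its sign is $(-1)^{\ell-1}\varepsilon$, which must equal $-1$, and this again forces $\varepsilon=(-1)^\ell$. The hypothesis $k\ge3$ plays no role in this lemma.
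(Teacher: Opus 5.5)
Your proof is correct and is essentially the paper's argument. For $\ell_k=2m$ the paper lists the walk as $m,n-m,m-1,n-m+1,\dots$, which is your reversed alternating extreme sequence; it then reads off the outer blocks of used labels and handles odd $\ell_k$ by reflection. Your identification $x_i=y_{\ell-i}$ with $\varepsilon=(-1)^{\ell}$ covers both parities at once and reuses the count from Lemma~\ref{lem:segregation}. It also shows the walk ends at label $0$, so the paper's display ending in ``$\dots,0,n-1$'' has its last two terms swapped; this is harmless, since only the label set matters.
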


\begin{proof}
Write $\ell_k=2m$ first.  Then $H=m$.  The walk alternates
\[
   m,\ n-m,\ m-1,\ n-m+1,\ \dots,\ 0,\ n-1.
\]
Thus the used labels are
\[
   \{0,\dots,m\}\cup\{n-m,\dots,n-1\},
\]
and the unused labels are $[m+1,n-m-1]$, an interval of size
$n-1-2m=R$.  The consumed differences are the entries of the prescribed
difference list.

If $\ell_k=2m+1$, then $H=n-m-1$ and the same computation, reflected about the
middle of the label set, gives used labels
\[
   \{0,\dots,m\}\cup\{n-m-1,\dots,n-1\}.
\]
The unused interval is $[m+1,n-m-2]$, again of size $R$, and the hub is one
unit above it.
\end{proof}

We use this reduction mainly for search and for the six-arm
residual problem below.

\subsection{Hub-one self-matched packing}

We next use a different viewpoint.  Put the hub at label $1$ and write
$m=n-1$.  The non-hub labels are
\[
   A_n=\{0,2,3,\dots,m\}.
\]
Define a matching map $\tau_n:A_n\to\{1,\dots,m\}$ by
\[
   \tau_n(0)=m,\qquad \tau_n(x)=x-1\quad(x\ge2).
\]
For $x\ge2$, this is exactly the difference made by attaching $x$ directly to
the hub as a leaf.

\begin{definition}[Self-matched leg]
\label{def:self-matched-leg}
A rooted label path
\[
   P=(1,x_1,x_2,\dots,x_L)
\]
with distinct $x_i\in A_n$ is a self-matched leg if
\[
   \{|1-x_1|,|x_1-x_2|,\dots,|x_{L-1}-x_L|\}
   =
   \tau_n(\{x_1,\dots,x_L\}).
\]
In the packing theorem below we will need at least one self-matched leg to
contain the label $0$.
\end{definition}

\begin{theorem}[Self-matched packing]
\label{thm:self-matched-packing}
Let $P_1,\dots,P_r$ be self-matched legs, all rooted at the common hub label
$1$.  Suppose their non-hub label sets are pairwise disjoint and that at least
one of them contains $0$.  Attach these paths as the nontrivial arms of an ST,
and attach every unused non-hub label as a leaf at the hub.  The resulting
labeling is graceful.
\end{theorem}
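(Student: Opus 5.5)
The plan is to check the two requirements of gracefulness separately: that the labeling is a bijection onto $\{0,\dots,m\}$, and that the edge differences are exactly $\{1,\dots,m\}$. For the difference count, the key observation is that $\tau_n$ is itself a bijection $A_n\to\{1,\dots,m\}$. Indeed, $\tau_n(0)=m$, while $x\mapsto x-1$ maps $\{2,\dots,m\}$ bijectively onto $\{1,\dots,m-1\}$. So it suffices to assign to every non-hub label $x$ one edge whose difference is $\tau_n(x)$, with the assigned edges exhausting the edge set.

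First, the vertex labeling. The hub has label $1$. The legs $P_1,\dots,P_r$ use pairwise disjoint subsets $X_1,\dots,X_r$ of $A_n$. Every label in $U=A_n\setminus(X_1\cup\dots\cup X_r)$ is placed on a pendant leaf at the hub. Hence every element of $\{0,\dots,m\}$ appears exactly once, and the tree has $n$ vertices. The structure is a spider: the legs are paths from the hub and the leaves are arms of length $1$, with hub label $1$.

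Next, the edges, which split into leg edges and leaf edges. By Definition~\ref{def:self-matched-leg}, the edge differences of $P_j$ form the set $\tau_n(X_j)$. Since a leg with $L$ non-hub labels has exactly $L$ edges and $\tau_n$ is injective, these $L$ differences are distinct and each element of $\tau_n(X_j)$ occurs exactly once. A leaf edge joins the hub to some $x\in U$ and has difference $|x-1|$. This is where the hypothesis about $0$ enters: because some leg contains $0$, we have $0\notin U$. So every $x\in U$ satisfies $x\ge 2$, and its difference is $x-1=\tau_n(x)$. Without this hypothesis, a leaf labeled $0$ would give difference $1$ rather than $m$, which collides with the leaf labeled $2$ and leaves $m$ unused. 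Checking this is the only delicate step.

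Combining the two parts, the multiset of all edge differences is $\tau_n(X_1)\cup\dots\cup\tau_n(X_r)\cup\tau_n(U)=\tau_n(A_n)=\{1,\dots,m\}$. The union is disjoint because the sets $X_j$ and $U$ partition $A_n$ and $\tau_n$ is injective. There are $m=n-1$ edges realizing $m$ distinct differences, so the labeling is graceful.
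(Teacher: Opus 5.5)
Your proposal is correct and follows essentially the same argument as the paper: the legs realize $\tau_n(X)$, the hypothesis that some leg contains $0$ forces every hub leaf label $y$ to satisfy $y\ge 2$ and hence contribute $\tau_n(y)$, and injectivity of $\tau_n$ makes the union equal to $\tau_n(A_n)=\{1,\dots,m\}$ with no repetitions. You also spell out two points the paper leaves implicit, namely that $\tau_n$ is a bijection and that set equality in the self-matched condition forces the $L$ differences on each leg to be distinct. One small correction to your side remark: label $2$ may sit on a leg rather than a leaf, but either way it already produces difference $1$, so the collision you describe still occurs.
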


\begin{proof}
Let $X$ be the union of the labels used on the self-matched legs.  Since
$\tau_n$ is injective and each leg is self-matched, the nontrivial arms use
exactly the difference set $\tau_n(X)$.  Let $Y=A_n\setminus X$.  Since
$0\in X$, every label in $Y$ is at least $2$, and a leaf labeled $y\in Y$
contributes the difference $y-1=\tau_n(y)$.  Thus the leaves use
$\tau_n(Y)$.  Together the arms and leaves use
\[
   \tau_n(X)\sqcup\tau_n(Y)=\tau_n(A_n)=\{1,\dots,m\},
\]
and the vertex labels are exactly $\{0,1,\dots,m\}$.
\end{proof}

\subsection{Explicit self-matched legs}

We now give two explicit sources of self-matched legs.  The first supplies
legs of arbitrary prescribed length, not necessarily containing label $0$; the
second supplies one leg containing label $0$, which is needed to close the
packing theorem.

\begin{theorem}[Multiplicative legs]
\label{thm:multiplicative-legs}
Let $L\ge2$ and $q\ge1$, with $n\ge Lq+2$.  Define
$P_L(q)=(1,x_1,\dots,x_L)$ by
\[
   x_{2j+1}=(L-j)q+1\quad(j\ge0),\qquad
   x_{2j}=jq+1\quad(j\ge1),
\]
truncated after $L$ non-hub labels.  Then $P_L(q)$ is a self-matched leg with
label set
\[
   \{q+1,2q+1,\dots,Lq+1\}
\]
and difference set
\[
   \{q,2q,\dots,Lq\}.
\]
\end{theorem}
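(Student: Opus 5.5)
The leg $P_L(q)$ is a scaled copy of Rosa's alternating extreme path. So the plan is to reduce to the case $q=1$ by an affine change of variables, verify that case directly, and then check the range conditions.

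\textbf{Step 1: the normalized path.} Write every label as $x_i=y_iq+1$. The hub label $1$ then corresponds to $y_0=0$, and the non-hub labels correspond to
\[
   (y_1,y_2,y_3,y_4,\dots)=(L,\,1,\,L-1,\,2,\,\dots).
\]
This is the alternating extreme sequence $0,L,1,L-1,2,\dots$ on $\{0,1,\dots,L\}$, truncated after $L$ non-hub terms. I will check three facts about it.
\begin{enumerate}[label=(\roman*),itemsep=2pt]
   \item The odd-indexed terms $L,L-1,\dots$ and the even-indexed terms $1,2,\dots$ do not collide before they meet. Hence the $y_i$ with $i\ge1$ are distinct and form exactly $\{1,\dots,L\}$.
   \item Consecutive differences are $|y_{i+1}-y_i|=L-i$ for $i=0,\dots,L-1$. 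This is the same computation as in Lemma~\ref{lem:segregation}, with $n-1$ replaced by $L$.
   \item Consequently, the differences of the normalized path are exactly $\{1,\dots,L\}$.
\end{enumerate}
For (i), the odd indices $2j+1\le L$ give values $L-j\ge L-(L-1)/2$, and the even indices $2j\le L$ give values $j\le L/2$. I expect these two ranges to be disjoint, touching only at the middle value when $L$ is even or odd as appropriate. This parity bookkeeping is the only place needing care, and it is routine.

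\textbf{Step 2: scaling back.} Multiplying by $q$ and adding $1$ preserves distinctness and scales every difference by $q$. So the label set of $P_L(q)$ is $\{jq+1:1\le j\le L\}$, and the difference set is $\{q,2q,\dots,Lq\}$, as claimed.

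\textbf{Step 3: membership in $A_n$.} I must show the labels lie in $A_n=\{0,2,\dots,m\}$ with $m=n-1$. Since $q\ge1$, every label $jq+1$ with $j\ge1$ is at least $2$. The largest label is $Lq+1$, and the hypothesis $n\ge Lq+2$ gives $Lq+1\le n-1=m$. Label $0$ never appears, so $\tau_n$ acts on this leg by $x\mapsto x-1$.

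\textbf{Step 4: self-matching.} Applying $\tau_n$ gives
\[
   \tau_n(\{q+1,\dots,Lq+1\})=\{q,2q,\dots,Lq\},
\]
which coincides with the difference set computed in Step 2. This is exactly the condition in Definition~\ref{def:self-matched-leg}, so $P_L(q)$ is a self-matched leg.
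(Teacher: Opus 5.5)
Your proof is correct and follows essentially the same route as the paper: the paper also observes that $P_L(q)$ lists the progression $q+1,\dots,Lq+1$ in alternating high--low order, reads off the differences $Lq,(L-1)q,\dots,q$, and applies $\tau_n(x)=x-1$. Your normalization $x=yq+1$ makes this explicit, and you also check the bound $Lq+1\le m$, which the paper leaves implicit; in (i) your own bounds already give strict disjointness, since the even-index values are at most $L/2<(L+1)/2$, so no hedging about a ``middle value'' is needed.
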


\begin{proof}
The path lists the arithmetic progression
$q+1,2q+1,\dots,Lq+1$ in alternating high-low order.  The edge from the hub has
difference $Lq$, and the remaining edges give $(L-1)q,\dots,q$.  Since each
label is at least $2$, applying $\tau_n$ to the label set gives the same
difference set.
\end{proof}

The packing theorem needs at least one self-matched leg containing label $0$.
The following templates provide such a leg in every length.

\begin{theorem}[Closure templates]
\label{thm:closure-templates}
For every $L\ge2$ and every allowed $m$ below, let $n=m+1$ and define
\[
C_L(m)=
\begin{cases}
(1,m,0), & L=2,\quad m\ge2,\\[2pt]
(1,m-1,0,m), & L=3,\quad m\ge3,\\[2pt]
(1,2^{L-4}+1,2^{L-5}+1,\dots,2^0+1,m,0,m-1),
   & L\ge4,\quad m\ge2^{L-4}+3 .
\end{cases}
\]
Then $C_L(m)$ is a self-matched leg of length $L$ containing $0$ in the
ambient label set $\{0,\dots,m\}$.
\end{theorem}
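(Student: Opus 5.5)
The plan is to verify Definition~\ref{def:self-matched-leg} directly for each of the three templates. In each case I will list the non-hub labels $x_1,\dots,x_L$, apply $\tau_n$ to them using $\tau_n(0)=m$ and $\tau_n(x)=x-1$ for $x\ge 2$, and compute the $L$ edge differences along the path from the hub $1$. The claim then reduces to three checks: the two multisets agree, the labels are distinct elements of $A_n$ (so none equals $1$ and all lie in $[0,m]$), and the length is $L$. Since $\tau_n$ is injective, distinct labels give distinct values under $\tau_n$. So once the multisets agree, the edge differences are automatically distinct.

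For the small cases the check is immediate. For $L=2$, the path $(1,m,0)$ has differences $m-1$ and $m$, while $\tau_n(\{m,0\})=\{m-1,m\}$. The condition $m\ge 2$ ensures $m\in A_n$ and $m\neq 0$. For $L=3$, the path $(1,m-1,0,m)$ has differences $m-2,\ m-1,\ m$, while $\tau_n(\{m-1,0,m\})=\{m-2,m,m-1\}$. Here $m\ge 3$ is exactly what makes $m-1\ge 2$, so that $m-1\in A_n$ and is distinct from $0$, $1$ and $m$.

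For $L\ge 4$ I split the path into a \emph{doubling chain} and a \emph{closing tail}. The chain consists of the $L-3$ labels $2^{L-4}+1,\dots,2^0+1=2$. The hub edge has difference $2^{L-4}$, and the step from $2^i+1$ to $2^{i-1}+1$ has difference $2^{i-1}$. Hence the chain spends exactly $\{2^{L-4},2^{L-5},\dots,1\}$, which is $\tau_n$ of the chain labels. The tail is $2\to m\to 0\to m-1$, with differences $m-2,\ m,\ m-1$, and these equal $\tau_n(m)$, $\tau_n(0)$, $\tau_n(m-1)$ up to order. The total count is $(L-3)+3=L$.

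The only real point to check is distinctness of labels between the chain and the tail. The largest chain label is $2^{L-4}+1$, and the smallest tail label other than $0$ is $m-1$. The hypothesis $m\ge 2^{L-4}+3$ gives $m-1\ge 2^{L-4}+2>2^{L-4}+1$, so all labels are distinct. All chain labels are at least $2$, so none equals the hub label $1$. The same hypothesis makes the tail difference $m-2$ strictly exceed every chain difference, which is consistent with the injectivity observation above.
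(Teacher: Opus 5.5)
Your proposal is correct and follows the paper's proof essentially step for step. You check $L=2$ and $L=3$ directly, then split the $L\ge4$ path into the power-of-two prefix ending at $2$ and the tail $2\to m\to 0\to m-1$, whose differences $m-2,m,m-1$ are $\tau_n$ of $m,0,m-1$. You also use $m\ge 2^{L-4}+3$ for label distinctness, as the paper does, and you spell out the $A_n$-membership and distinctness checks a little more explicitly than the paper.
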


\begin{proof}
The cases $L=2$ and $L=3$ are immediate:
\[
   (1,m,0) \quad\text{has differences}\quad \{m-1,m\},
\]
and
\[
   (1,m-1,0,m) \quad\text{has differences}\quad \{m-2,m-1,m\}.
\]
These are exactly the $\tau_n$-images of their non-hub label sets.

Now let $L\ge4$ and write $L=r+3$.  The prefix
\[
   1,\ 2^{r-1}+1,\ 2^{r-2}+1,\ \dots,\ 3,\ 2
\]
has consecutive differences
\[
   2^{r-1},2^{r-2},\dots,1.
\]
The final three edges contribute
\[
   |2-m|=m-2,\qquad |m-0|=m,\qquad |0-(m-1)|=m-1.
\]
Thus the edge differences are
\[
   \{1,2,4,\dots,2^{r-1}\}\cup\{m-2,m-1,m\}.
\]
The map $\tau_n$ sends the low labels $2^{j-1}+1$ to the powers
$2^{j-1}$, and sends $m,0,m-1$ to $m-1,m,m-2$.  The lower bound on $m$
ensures all labels are distinct.
\end{proof}

\begin{corollary}[Leaf-extended STs]
\label{cor:leaf-extended-st}
Fix $c\ge2$ and nontrivial arm lengths $L_1,\dots,L_r\ge2$.  For all
sufficiently large $t$, the ST
\[
   S(c,L_1,\dots,L_r,1^t)
\]
has a graceful labeling with hub label $1$.
\end{corollary}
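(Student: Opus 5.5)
We will deduce the corollary from Theorem~\ref{thm:self-matched-packing}. The closure template $C_c(m)$ of Theorem~\ref{thm:closure-templates} serves as the leg containing $0$. The multiplicative legs $P_{L_i}(q_i)$ of Theorem~\ref{thm:multiplicative-legs} serve as the remaining nontrivial arms. The leftover labels become the $t$ leaves.

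Set $N=c+\sum_i L_i$, so that the spider has $n=1+N+t$ vertices and $m=n-1=N+t$. We will choose everything except $m$ first. The low part of the closure template is
\[
   F=\{2,3,5,\dots,2^{c-4}+1\}
\]
for $c\ge4$, and $F$ is empty for $c\le3$. Its high part is $\{0,m-1,m\}$, or a subset of it for $c=2,3$. Thus the template uses a fixed finite set of small labels together with the labels $0$, $m-1$, $m$.

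For the multiplicative legs we want pairwise disjoint progressions $\{q_i+1,2q_i+1,\dots,L_iq_i+1\}$ that also avoid $F$. We will pick the steps $q_1<q_2<\dots<q_r$ greedily. Each $q_i$ is taken to be a large prime, larger than $2^{c}$ and larger than $L_jq_j$ for every earlier $j$. Then the smallest element $q_i+1$ of the new progression exceeds every element of the earlier progressions and of $F$, so disjointness is automatic. Choosing $q_i$ simply very large, rather than prime, suffices for the same reason.

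Let
\[
   K=\max\bigl(\max_i L_iq_i+1,\ 2^{c-4}+3\bigr).
\]
These choices do not depend on $t$. We now take $t$ large enough that $m-1>K$ and $n\ge L_iq_i+2$ for every $i$. This threshold also guarantees $m\ge 2^{c-4}+3$, $m\ge2$ and $m\ge3$ as required by the closure templates. With $m-1>K$, the high labels $m-1$ and $m$ of the template, together with $0$, are distinct from every multiplicative label and from $F$. Hence all the legs have pairwise disjoint non-hub label sets, and one of them contains $0$.

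Theorem~\ref{thm:self-matched-packing} then yields a graceful labeling with hub label $1$. The arms of this labeling have lengths $c,L_1,\dots,L_r$, and there are $m-N=t$ leaves. These are exactly the arms of $S(c,L_1,\dots,L_r,1^t)$. The only point requiring care is that the leaf count is exactly $t$. This holds by the vertex count: the legs use $N$ non-hub labels, and the set $A_n$ has $m$ elements, so the leaves receive the remaining $m-N=t$ labels.

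The main obstacle is bookkeeping rather than ideas. Two checks are needed. First, the conditions in Theorem~\ref{thm:closure-templates} for the small cases $c=2,3$ must be stated separately, since there the template has no low part. Second, we must confirm that the self-matching condition is relative to $\tau_n$ for the final $n$. This is fine, because the multiplicative legs only use $\tau_n(x)=x-1$ on labels at least $2$, which is independent of $n$. The closure template is defined directly in terms of $m$.
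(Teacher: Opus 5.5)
Your proposal is correct and follows essentially the same route as the paper: the closure template $C_c(m)$ supplies the leg containing $0$, the multiplicative legs $P_{L_i}(q_i)$ cover the other arms, $t$ is taken large, and Theorem~\ref{thm:self-matched-packing} finishes. The only difference is that you separate the progressions greedily by magnitude, where the paper takes $q_i=B^i$ with $B>\max\{M_c,L_1,\dots,L_r\}$ and uses a divisibility argument. You also spell out two points the paper leaves implicit: that the multiplicative labels avoid the template's low labels, and that exactly $t$ labels remain for the leaves.
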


\begin{proof}
Let $M_c=2$ for $c=2$, $M_c=3$ for $c=3$, and
$M_c=2^{c-4}+3$ for $c\ge4$.  Choose an integer
\[
   B>\max\{M_c,L_1,\dots,L_r\}
\]
and set $q_i=B^i$.  Use the closure template $C_c(m)$ on the arm of length
$c$, and use the multiplicative leg $P_{L_i}(q_i)$ on the arm of length
$L_i$.  The label sets of the multiplicative legs are pairwise disjoint
because equality $j_1B^{i_1}=j_2B^{i_2}$ with $i_1<i_2$ would force
$j_1\ge B$, contradicting $j_1\le L_{i_1}<B$.  For large enough $t$, the
global label maximum $m=n-1$ satisfies the bound needed for $C_c(m)$, and
$m-1$ is larger than every label used by the multiplicative legs.  Hence the
chosen legs are pairwise label-disjoint and one contains $0$.  The conclusion
follows from Theorem~\ref{thm:self-matched-packing}.
\end{proof}

\begin{remark}[Relation to concurrent work]
\label{rem:concurrent}
The construction in Corollary~\ref{cor:leaf-extended-st} first packs several
arms of prescribed lengths and then attaches all unused labels as hub leaves.
The concurrent families of Shan and Zhong~\cite{ShanZhong2026} have a different
shape: one requires fast-growing arm lengths, and the other allows only one
long arm.  These results do not cover the padded mixed-length regime treated by
Corollary~\ref{cor:leaf-extended-st}.  The corollary requires $t$ to be
sufficiently large and does not address fixed arm-length profiles with few
leaves.
\end{remark}

\subsection{Largest-difference-first}

The previous corollary handles STs with enough leaves.  For general STs we also
use a largest-difference-first search.  After the primary arm is placed by
Lemma~\ref{lem:st-safe-zone}, the remaining arms are processed in decreasing
order of length.  Along the current arm, the search tries unused differences in
decreasing order; for each difference it tries the two possible signs and keeps
the first move landing on an unused label.  If a later step fails, the search
backtracks to the most recent choice.

LDF is an ordering within a depth-first search rather than a deterministic
labeling rule.  It found a labeling for every tested instance listed below, and
a separate graceful verifier checked every output.  These computations do not
prove that the ordering always succeeds.

\begin{center}
\begin{tabular}{rl}
\toprule
arms & tested range \\
\midrule
$3$ & all $S(\ell_a,\ell_b,\ell_p)$ with $\ell_p\le55$ \\
$4$ & all $S(\ell_1,\dots,\ell_4)$ with $\max_i\ell_i\le30$ \\
$5$ & all $S(\ell_1,\dots,\ell_5)$ with $\max_i\ell_i\le20$ \\
$6$ & all configurations with $\sum_i\ell_i\le34$ \\
$7$--$10$ & all configurations with $\sum_i\ell_i\le22$ for each arm count \\
\bottomrule
\end{tabular}
\end{center}

\subsection{A six-arm reduction target}
\label{subsec:offset-five}

Spiders with six arms are the first case not covered by the results cited above.
Applying the safe-zone construction to a longest arm uses the largest
differences and leaves a five-arm residual problem.  This residual problem is
not ordinary five-arm gracefulness because the hub position is inherited from
the first arm.

\begin{proposition}[Hub-zero five-arm reduction]
\label{prop:hub-zero-five-arm-reduction}
If every five-arm spider admits a graceful labeling with hub label $0$, then
every six-arm spider is graceful.
\end{proposition}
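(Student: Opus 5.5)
We would prove this by combining the safe-zone reduction of Lemma~\ref{lem:st-safe-zone} with a translation or reflection of the assumed hub-zero five-arm labeling. Let $S=S(\ell_1,\dots,\ell_6)$ be a six-arm spider with $\ell_1\le\cdots\le\ell_6$, and put $n=1+\sum_i\ell_i$ and $R=n-1-\ell_6=\sum_{i\le5}\ell_i$.

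First we apply Lemma~\ref{lem:st-safe-zone} with $k=6$ (so $k\ge3$ holds). We place the hub at $H$ and label the longest arm by the prescribed $\armwalk$. That arm then uses exactly the differences $\{n-\ell_6,\dots,n-1\}$. The unused labels form an interval $[A,B]$ of size $R$, with $H=A-1$ if $\ell_6$ is even and $H=B+1$ if $\ell_6$ is odd. The remaining differences are exactly $\{1,\dots,R\}$.

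Next we consider the five-arm spider $S'=S(\ell_1,\dots,\ell_5)$. It has $1+R$ vertices, so a graceful labeling uses vertex labels $\{0,\dots,R\}$ and edge differences $\{1,\dots,R\}$. By hypothesis there is a graceful labeling $g$ of $S'$ with $g(\text{hub})=0$, so its non-hub labels are exactly $\{1,\dots,R\}$. We identify the five shorter arms of $S$ with the arms of $S'$, sharing the hub, and transport $g$ as follows.
\begin{itemize}
\item If $\ell_6$ is even, we set $f(v)=H+g(v)$. The hub goes to $H$, and the non-hub vertices go to $\{H+1,\dots,H+R\}=[A,A+R-1]=[A,B]$.
\item If $\ell_6$ is odd, we set $f(v)=H-g(v)$. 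The hub again goes to $H$, and the non-hub vertices go to $\{H-R,\dots,H-1\}=[B-R+1,B]=[A,B]$.
\end{itemize}
Both maps are isometries of $\mathbb{Z}$, so the edge differences on the five short arms remain exactly $\{1,\dots,R\}$.

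Finally we check that the pieces fit together. On vertices, the longest arm together with the hub uses the complement of $[A,B]$, and the short arms use $[A,B]$ bijectively, so $f$ is a bijection onto $\{0,\dots,n-1\}$. On edges, the differences $\{n-\ell_6,\dots,n-1\}=\{R+1,\dots,n-1\}$ and $\{1,\dots,R\}$ partition $\{1,\dots,n-1\}$. Hence $f$ is graceful.

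The only step needing care is the alignment in the two parity cases: the hub must sit immediately below the interval in the even case and immediately above it in the odd case. This is why we translate in one case and reflect in the other, and it is exactly where the hypothesis that the hub label is $0$ (an endpoint of the label range) is used. A graceful labeling of $S'$ with an interior hub label could not be made to fit the safe-zone interval this way.
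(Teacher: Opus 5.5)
Your proof is correct and takes the same approach as the paper's. You apply Lemma~\ref{lem:st-safe-zone} to a longest arm, then shift the hub-zero five-arm labeling by $H$ when $H=A-1$, or reflect it via $x\mapsto H-x$ when $H=B+1$, so the residual labels fill $[A,B]$ exactly and the differences $\{1,\dots,R\}$ are preserved; your explicit endpoint checks and the partition $\{R+1,\dots,n-1\}\sqcup\{1,\dots,R\}$ just spell out what the paper leaves implicit.
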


\begin{proof}
Let the six-arm spider have total edge count $M$, and choose a longest arm of length
$L$.  Apply Lemma~\ref{lem:st-safe-zone} to this arm.  The chosen arm uses the
differences $M-L+1,\dots,M$, while the unused labels form an interval
$[A,B]$ of size $s=M-L$ adjacent to the hub label $H$.  The remaining
differences are $1,\dots,s$.

The other five arms form a five-arm spider with $s$ edges.  By hypothesis,
this spider has a graceful labeling with hub label $0$ and non-hub labels
$1,\dots,s$.  If $H=A-1$, add $H$ to every non-hub label of that labeling.  If
$H=B+1$, replace every non-hub label $x$ by $H-x$.  In both cases the residual
labels become exactly $[A,B]$, the hub remains $H$, and all residual edge
differences are preserved.  Together with the first arm, this gives a graceful
labeling of the original six-arm spider.
\end{proof}

The proposition uses a stronger input than ordinary five-arm gracefulness.  The
following offset formulation records the residual problem left by a fixed first
arm.

Let a six-arm spider have total edge count $M$, and suppose one chosen arm has
even length $2q$.  Write $s=M-2q$ for the total length of the other five arms.
Consider a labeling of the chosen arm which uses the outside labels
\[
   \{0,1,\dots,q\}\cup\{M-q+1,M-q+2,\dots,M\}
\]
and realizes the top differences
\[
   s+1,s+2,\dots,M.
\]
If this arm starts at the hub label $k\in\{0,\dots,q\}$, then the unused labels
are exactly
\[
   q+1,q+2,\dots,q+s,
\]
and the unused differences are $1,2,\dots,s$.

Let $S$ be a five-arm spider with $s$ edges.  For $0\le c\le s-1$, an
\emph{offset hub-zero labeling} of $S$ with offset $c$ is a labeling with hub
label $0$, non-hub labels
\[
   c+1,c+2,\dots,c+s,
\]
and edge differences exactly
\[
   1,2,\dots,s.
\]

With this terminology, the even-arm reduction is immediate.  If the chosen
six-arm path above starts at $k$ with $0\le q-k\le s-1$, and the remaining
five-arm spider has an offset hub-zero labeling with
\[
   c=q-k,
\]
then adding $k$ to every non-hub label in that five-arm labeling and using hub
label $k$ gives a graceful labeling of the original six-arm spider.  Conversely,
any graceful labeling built from such an outside-label first arm restricts to
an offset hub-zero labeling of the residual five-arm spider after subtracting
$k$ from the residual labels.  This statement is conditional only on the
displayed top-difference ordering of the first arm; the usual zigzag gives
$k=q$, hence $c=0$.

The odd case is the same after reflection.  If the chosen arm has length
$2q+1$, use outside labels
\[
   \{0,1,\dots,q\}\cup\{M-q,M-q+1,\dots,M\}.
\]
If the arm starts at hub label $M-q+c$ with
$0\le c\le\min\{q,s-1\}$, then reflecting the residual labels by
$y\mapsto M-q+c-y$ gives an offset hub-zero problem with offset $c$.

Thus the known five-arm theorem does not give exactly the input needed here.
For this reduction one needs a hub-pinned, shifted five-arm labeling.  The case
$c=0$ is the ordinary hub-zero five-arm problem.  Varying $c$ changes the
non-hub label interval while the required difference set remains
$\{1,\dots,s\}$.

The offset problem can also be normalized by subtracting $c$ from the non-hub
labels while leaving the hub label equal to $0$.  The five arms then cover
$\{1,\dots,s\}$.
If $a_i$ is the first normalized label on the $i$-th arm, then the corresponding
hub-edge difference in the offset labeling is $a_i+c$.  Hence the normalized
problem is to cover $\{1,\dots,s\}$ by five paths whose hub-edge differences
\[
   a_1+c,\dots,a_5+c
\]
together with their internal edge differences give $\{1,\dots,s\}$.
Since no internal edge between labels in $\{1,\dots,s\}$ has difference $s$,
one first label is forced to be $s-c$.

A stronger offset problem requires the five
hub-edge differences to be
\[
   s-4,s-3,s-2,s-1,s.
\]
Equivalently, in normalized labels the five arms must start from
\[
   s-c-4,\ s-c-3,\ s-c-2,\ s-c-1,\ s-c,
\]
and the internal edges must realize $1,2,\dots,s-5$.  This stronger version
requires $0\le c\le s-5$.

\subsection{ST questions}

The ST part leaves four concrete problems.
\begin{enumerate}[label=Q\arabic*.,itemsep=2pt]
   \item Prove an offset five-arm residual theorem, especially the strengthened
   form above, strong enough to close the six-arm reduction.
   \item Find a direct packing criterion for self-matched legs of prescribed
   lengths, without requiring many added leaves.
   \item Explain why the LDF-ordered search is so small in practice, or find a
   deterministic subrule that removes backtracking.
   \item Find a closed four-arm constructor for arms of mixed lengths, in the
   spirit of the leaf-extended family above.
\end{enumerate}

Both parts of the paper use an alternating extreme walk along a path.  For FBTs,
the path is a longest root-to-leaf spine; for spiders, it is the primary arm and
also appears in the self-matched packing construction.  The resulting residual
problems differ: FBTs leave side subtrees with gapped label sets, whereas spiders
leave several arms using one safe-zone interval.

\paragraph{Code availability.}
The C++ programs used for the computations in this paper---the exact
pinned-spine search behind Table~\ref{tab:fbt-empirical} together with its
one-leaf relaxation, and the LDF depth-first search over the spider
ranges of Section~\ref{sec:spider}---are available in the accompanying
\href{https://github.com/bogdan27182/graceful-paper}{github repository}.

\bibliographystyle{plain}
\bibliography{references}

\end{document}